\theoremstyle{plain}
\newtheorem{theorem}{Theorem}
\newaliascnt{corollary}{theorem}
\newtheorem{corollary}[corollary]{Corollary}
\newaliascnt{lemma}{theorem}
\newtheorem{lemma}[lemma]{Lemma}
\theoremstyle{remark}
\newaliascnt{remark}{theorem}
\newtheorem{remark}[remark]{Remark}
\newcommand{\diff}{\mathop{}\!\mathrm{d}}
\DeclareMathOperator{\supp}{supp}
\DeclareMathOperator{\LittleO}{o}
\begin{document}

%%%%%%%%%% Title %%%%%%%%%%
\title[Resolvent Bounds Imply Controllability]{Resolvent Bounds Imply Observability\\ from Measurable Time Sets}

\author[N.~Burq]{Nicolas Burq}
\address[Nicolas Burq]{Laboratoire de Mathématiques d'Orsay, Université Paris-Saclay and Institut Universitaire de France}
\email{nicolas.burq@universite-paris-saclay.fr}

\author[H.~Zhu]{Hui Zhu}
\address[Hui Zhu]{New York University Abu Dhabi, Division of Science}
\email{hui.zhu@nyu.edu}

\begin{abstract}
    We prove that on a compact Riemannian manifold, resolvent bounds for the Laplace--Beltrami operator imply observability, and thus controllability, for the Schrödinger propagator from time sets of positive Lebesgue measure.

    Applications include almost all cases where observability and controllability hold from time intervals, particularly when the geometric control condition is satisfied or when the manifold is a compact surface of negative curvature.
\end{abstract}

\maketitle

\section{Introduction}

Since the work of Phung and Wang \cite{PhungWang2013} on the observation of heat equations from time sets of positive Lebesgue measure, substantial progress has been made on this topic, relaxing the assumptions on the observation sets and allowed geometries (see e.g., \cite{ApraizEscauriaza2014, ApraizEscauriaza2013, BurqMoyano2023}).
By contrast, though perhaps not surprisingly (since they are notoriously more difficult to handle), much less was known, until recently, about the observability and controllability of other natural evolution equations, such as wave and Schrödinger equations, from measurable sets.

To the best of our knowledge, no observation result is currently known for wave equations.
For Schrödinger equations, the available results are restricted to tori:
Burq and Zworski \cite{BurqZworski2019} (see also Le Balc'h and Martin \cite{LeBalchMartin2023}) proved observability on $\mathbb{T}^2$ from $(0,T) \times \omega$ with $\omega$ of positive measure, and Burq and Zhu \cite{BurqZhu2025tori} extended this result to a general framework on higher dimensional tori, allowing observation sets of the form $\prod_{j=0}^d \omega_j$, where each $\omega_j$ is a subset of $\mathbb{T}$ and has positive measure.

The purpose of this article is to prove, for the Schrödinger equation, the analog of Phung and Wang's result \cite{PhungWang2013}.
We thus fix a \emph{compact} Riemannian manifold $M$, with or without boundary, and let $\Delta$ denote its Laplace--Beltrami operator, subject to Dirichlet or Neumann boundary conditions when a boundary is present.
Let $E \subset \mathbb{R}$ and $\omega \subset M$ be subsets of positive measure.
Recall that the Schrödinger propagator $e^{it\Delta}$, which acts isometrically on $L^2(M)$, is \emph{observable} from $E\times \omega$ if there exists $C>0$ such that, for all $u \in L^2(M)$, we have the observation estimate
\begin{equation}
    \label{eq::OBS}
    \| u\|_{L^2(M)}^2
    \le C \int_E \int_\omega \bigl| e^{it\Delta} u(x) \bigr|^2 \diff x \diff t.
\end{equation}
Our main result is the following.

\begin{theorem}
\label{thm::main-schrodinger}
    Let $\omega \subset M$ be a subset of positive measure.
    Assume there exists $K > 0$ such that, for all $\lambda \ge 1$ and all $u \in H^2(M)$, we have the resolvent bound
    \begin{equation}
    \label{eq::est-resolvent-laplacian}
        K \|u\|_{L^2(M)}
        \le \alpha(\lambda) \|(\lambda+\Delta) u\|_{L^2(M)} + \|u\|_{L^2(\omega)},
    \end{equation}
    where $\alpha(\lambda)>0$ satisfies the limit condition
    \begin{equation}
    \label{eq::limit-condition-alpha}
        \lim_{\lambda \to \infty} \alpha(\lambda)
        = 0.
    \end{equation}
    Then the Schrödinger propagator is observable from $E\times \omega$ for any $E \subset \mathbb{R}$ of positive measure.
\end{theorem}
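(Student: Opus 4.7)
The plan is a high–low frequency decomposition that combines a Nazarov-type inequality for exponential sums in time (which upgrades observation from an interval to a measurable subset of positive measure for the finite-dimensional low-frequency part) with the resolvent bound \cref{eq::est-resolvent-laplacian} applied to narrow spectral windows at high frequency (where the smallness \cref{eq::limit-condition-alpha} is crucial).

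Let $\{e_k\}$ be an orthonormal eigenbasis of $-\Delta$ with eigenvalues $\mu_k \ge 0$. Fix a large threshold $\Lambda \ge 1$, and decompose $u_0 = v_0 + w_0$ where $v_0 = \Pi_\Lambda u_0$ is the spectral projector onto $\{\mu_k \le \Lambda\}$. For the low-frequency part, $t \mapsto e^{it\Delta} v_0(x) = \sum_{\mu_k \le \Lambda} c_k e^{-it\mu_k} e_k(x)$ is, for a.e.\ $x$, a pure exponential sum in $t$ with at most $N(\Lambda) \lesssim \Lambda^{\dim M/2}$ terms by Weyl's law. A Nazarov-type inequality for such exponential sums therefore yields, for any fixed bounded interval $I$,
\begin{equation*}
    \int_I |e^{it\Delta}v_0(x)|^2 \diff t \le \Phi(\Lambda, |E \cap I|, |I|) \int_{E \cap I} |e^{it\Delta}v_0(x)|^2 \diff t
\end{equation*}
for a.e.\ $x$, with a finite constant whenever $|E \cap I| > 0$. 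Integrating over $x \in \omega$ and combining with observability on $I \times \omega$ (a standard consequence of the resolvent bound via Burq--Zworski-type arguments) gives $\|v_0\|_{L^2(M)}^2 \le C(\Lambda) \int_E \int_\omega |e^{it\Delta} v_0|^2 \diff x \diff t$.

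For the high-frequency part, the smallness of $\alpha$ is exploited as follows: if $u$ is spectrally localized in a window of width $\delta = K / (2\alpha(\lambda))$ around a large $\lambda$, then $\|(\lambda + \Delta) u\|_{L^2} \le \delta \|u\|_{L^2}$, so that \cref{eq::est-resolvent-laplacian} becomes $K\|u\| \le (K/2)\|u\| + \|u\|_{L^2(\omega)}$, which yields the spectral-window observation $\|u\|_{L^2(M)} \le 2 K^{-1} \|u\|_{L^2(\omega)}$. For $\Lambda$ large, by \cref{eq::limit-condition-alpha} these windows can be taken very wide. Decomposing $w_0$ via a smooth spectral partition of unity $\sum_j \chi_j^2 \equiv 1$ on $[\Lambda, \infty)$ with each $\chi_j$ supported in such a window, applying the pointwise bound to $\chi_j(-\Delta) e^{it\Delta} w_0$ for each $t$, integrating $t \in E$ (using conservation of the $L^2(M)$-norm), and summing in $j$ produces
\begin{equation*}
    \|w_0\|_{L^2(M)}^2 \le \frac{C}{|E|} \int_E \sum_j \|\chi_j(-\Delta) e^{it\Delta} u_0\|_{L^2(\omega)}^2 \diff t.
\end{equation*}

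The main obstacle is to compare the right-hand side above with the desired $\int_E \int_\omega |e^{it\Delta} u_0|^2 \diff x \diff t$: the two differ by cross-terms between distinct spectral windows, which vanish in $L^2(M)$ by orthogonality but generally not in $L^2(\omega)$. I expect to treat these via the splitting $|e^{it\Delta}u_0|^2 \ge (1-\varepsilon)|e^{it\Delta}w_0|^2 - \varepsilon^{-1}|e^{it\Delta}v_0|^2$, absorbing the $v_0$-contamination using the already-proved low-frequency bound; the condition $\alpha \to 0$ reenters crucially here, because very small $\alpha(\Lambda)$ produces very wide windows, limiting the number of ``neighbouring'' windows that contribute to each cross-term. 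Choosing $\Lambda$ and $\varepsilon$ appropriately then closes the estimate and delivers the observability inequality \cref{eq::OBS}.
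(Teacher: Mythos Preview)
Your low-frequency treatment via Nazarov is fine, and the pointwise-in-time spectral-window estimate $\|P_j g\|_{L^2(M)} \le (2/K)\|P_j g\|_{L^2(\omega)}$ is correctly derived from the resolvent bound. The genuine gap is exactly the ``main obstacle'' you flag, and your proposed fix does not close it. After summing in $j$ you need
\[
    \sum_j \|\chi_j(-\Delta) e^{it\Delta} w_0\|_{L^2(\omega)}^2 \lesssim \|e^{it\Delta} w_0\|_{L^2(\omega)}^2,
\]
i.e., a spatial Littlewood--Paley square-function bound \emph{restricted to a measurable set} $\omega$. This is false in general: even with disjoint windows (so the $P_j = \chi_j(-\Delta)$ are orthogonal on $L^2(M)$), the cross-terms $\sum_{j\ne k}\langle P_j g, P_k g\rangle_{L^2(\omega)}$ do not vanish, because $P_j$ is nonlocal and $\mathbf{1}_\omega$ destroys orthogonality. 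Your splitting $|e^{it\Delta}u_0|^2 \ge (1-\varepsilon)|e^{it\Delta}w_0|^2 - \varepsilon^{-1}|e^{it\Delta}v_0|^2$ only separates low from high frequency; it does nothing about the cross-terms \emph{between distinct high-frequency windows}, which is where the problem lies. And ``wide windows'' is a red herring: widening the windows reduces overlap (already zero if disjoint) but has no effect on the size of $\langle P_j g, P_k g\rangle_{L^2(\omega)}$ for $j\ne k$; there are still infinitely many such terms with no smallness.

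The paper avoids this obstruction by never attempting a spatial square-function estimate on $\omega$. Instead it: (i) proves a \emph{semiclassical} observation estimate on intervals $(s-\delta,s+\delta)$ with constant \emph{independent of $\delta$} (this is where $\alpha\to 0$ enters, via Burq--Zworski); (ii) sums over dyadic frequencies using a \emph{temporal} Littlewood--Paley theory, i.e., replacing $\phi_h(\Delta)$ by $\phi_h(i\partial_t)$ after conjugation by $e^{it\Delta}$, so that the square-function estimate is taken in the $t$-variable over a full interval, where it is legitimate --- this yields a weak observation estimate on $(s-\delta,s+\delta)\times\omega$ with a compact $H^{-2}$ remainder; (iii) passes from the interval to $E$ by Lebesgue differentiation plus Egorov (find $J\subset E$ of positive measure where the density of $E$ is uniformly close to $1$, so the interval average and the $E$-restricted average differ by $o(1)\|u\|^2$); (iv) removes the $H^{-2}$ remainder by the Bardos--Lebeau--Rauch uniqueness-compactness argument, with unique continuation supplied by Lusin--Privalov in $t$ and the measure-zero nodal set property of eigenfunctions. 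The transfer interval~$\to E$ happens \emph{after} the frequency summation, not before --- that ordering is what makes the argument close.
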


Also recall that by the Hilbert Uniqueness Method \cite{Lions1988control-v1}, the observation estimate \eqref{eq::OBS} is equivalent to the exact controllability of the Schrödinger equation.
Specifically, if $E \subset [0,T]$, then for all initial and final states $u_0,u_1 \in L^2(M)$, there exists a control $F \in L^2([0,T] \times M)$ which drives $u(0) = u_0$ to $u(T) = u_1$ through the forced Schrödinger equation
\begin{equation*}
    i \partial_t u + \Delta u = \bm{1}_{E \times \omega}F.
\end{equation*}

As another consequence of \cref{thm::main-schrodinger}, in almost all cases where the Schrödinger propagator is observable and controllable from time intervals, these intervals can be replaced by any time set of positive measure. 
More precisely, Burq and Zworski \cite{BurqZworski2004resolvent-control} showed that, in a general functional framework, observation estimates on time intervals follow from resolvent bounds, while Miller \cite{Miller2012} demonstrated that such resolvent bounds are necessary for observability and controllability on \emph{some} time interval. 
Together, these results establish resolvent bounds as a necessary and sufficient condition, modulo a minor discrepancy regarding the lengths of required time intervals (see \cref{remark2}). 
By \cref{thm::main-schrodinger}, we extend Burq and Zworski's result, thereby generalizing previously known observation and control results for Schrödinger equations to measurable time sets.

\begin{remark}
    For future applications, we stated \cref{thm::main-schrodinger} with \emph{measurable} $\omega$. 
    However, existing examples involve only \emph{open} subsets, yielding the following corollaries.
\end{remark}

\begin{corollary}
    If $\omega \subset M$ is an open subset satisfying the geometric control condition, meaning that every geodesic on $M$ reflecting on the boundary according to the laws of geometric optics (generalized bicharacteristics) intersects $\omega$, then for any subset $E \subset \mathbb{R}$ of positive measure, the observation estimate \eqref{eq::OBS} holds.
    Indeed, by Burq and Zworski \cite{BurqZworski2004resolvent-control}, the resolvent bound \eqref{eq::est-resolvent-laplacian} holds in this case with
    \begin{equation*}
        \alpha(\lambda) = \lambda^{-1/2}.
    \end{equation*}
\end{corollary}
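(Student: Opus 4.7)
The plan is to reduce the corollary to a direct application of \cref{thm::main-schrodinger}. The only nontrivial input I would need is the resolvent bound \eqref{eq::est-resolvent-laplacian} for open sets satisfying the geometric control condition, which is precisely the content of the classical estimate of Burq and Zworski~\cite{BurqZworski2004resolvent-control} that is cited in the statement.

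First I would recall the shape of the Burq--Zworski argument. Starting from the Bardos--Lebeau--Rauch theorem, which under GCC gives exact observability of the wave equation from time intervals, they show through a semiclassical reformulation (reducing to spectral windows of width $\lambda^{1/2}$ and exploiting the natural wave-to-Laplacian scaling) that this is equivalent to the resolvent estimate \eqref{eq::est-resolvent-laplacian} with $\alpha(\lambda) = \lambda^{-1/2}$. In particular, since $\lambda^{-1/2} \to 0$ as $\lambda \to \infty$, the limit condition \eqref{eq::limit-condition-alpha} is satisfied automatically, so both hypotheses of \cref{thm::main-schrodinger} are verified.

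Once these are in hand, \cref{thm::main-schrodinger} applies verbatim and produces the observation estimate \eqref{eq::OBS} for any measurable set $E \subset \mathbb{R}$ of positive measure. This is the promised extension of the classical interval-time observability for the Schrödinger equation under GCC to arbitrary time sets of positive Lebesgue measure.

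There is no substantive obstacle specific to the corollary, as all the analytic content is concentrated in \cref{thm::main-schrodinger}. The only points to check are essentially cosmetic: that the boundary conditions (Dirichlet or Neumann) and the sign conventions for $\Delta$ used by Burq and Zworski match those adopted here, and that the boundaryless case is also covered by the same reference. Both checks are routine.
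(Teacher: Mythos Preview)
Your proposal is correct and follows exactly the same approach as the paper: the corollary has no separate proof in the paper beyond the sentence already contained in its statement, namely that Burq--Zworski supplies the resolvent bound \eqref{eq::est-resolvent-laplacian} with $\alpha(\lambda)=\lambda^{-1/2}$, which satisfies \eqref{eq::limit-condition-alpha}, so \cref{thm::main-schrodinger} applies directly. Your additional remarks on the structure of the Burq--Zworski argument and the cosmetic checks are fine but not needed.
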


\begin{corollary}
    If $M$ is a compact surface of negative curvature and if $\omega \subset M$ is a nonempty open subset, then for any subset $E \subset \mathbb{R}$ of positive measure, the observation estimate \eqref{eq::OBS} holds. Indeed, by Dyatlov and Jin~\cite{DyatlovJin2018semiclassical}, and Dyatlov, Jin and Nonnenmacher \cite{DyatlovJinNonnenmacher2022control}, the resolvent bound \eqref{eq::est-resolvent-laplacian} holds in this case with
    \begin{equation*}
        \alpha(\lambda) = \lambda^{-1/2} \log \lambda.
    \end{equation*}
\end{corollary}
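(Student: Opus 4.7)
The plan is to invoke \cref{thm::main-schrodinger}; the only thing to check is its hypothesis, namely that on such an $M$ the resolvent bound \eqref{eq::est-resolvent-laplacian} holds with $\alpha(\lambda) = \lambda^{-1/2} \log \lambda$. The limit condition \eqref{eq::limit-condition-alpha} is automatic for this $\alpha$, so the only real content is verifying the resolvent estimate itself, which I would quote from the cited papers rather than reprove.

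More precisely, the first step is to import the semiclassical resolvent estimate of Dyatlov and Jin \cite{DyatlovJin2018semiclassical}, extended by Dyatlov, Jin and Nonnenmacher \cite{DyatlovJinNonnenmacher2022control}: for any nonempty open $\omega$ in a compact negatively curved surface $M$, there exist $C, h_0 > 0$ such that
\begin{equation*}
    \|u\|_{L^2(M)}
    \le \frac{C \log(1/h)}{h} \bigl\|(-h^2 \Delta - 1) u\bigr\|_{L^2(M)} + C \|u\|_{L^2(\omega)}
\end{equation*}
for all $u \in H^2(M)$ and all $0 < h \le h_0$. This is the one deep ingredient of the argument; its proof rests on a fractal uncertainty principle applied to the hyperbolic trapped set of the geodesic flow, and I would treat it as a black box.

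The second step is the standard rescaling $h = \lambda^{-1/2}$. Since $-h^2\Delta - 1 = -h^2(\Delta + \lambda)$, one has $\|(-h^2\Delta - 1)u\|_{L^2} = \lambda^{-1} \|(\lambda + \Delta)u\|_{L^2}$, while $\log(1/h)/h = \tfrac{1}{2} \lambda^{1/2} \log \lambda$, so the displayed inequality becomes
\begin{equation*}
    \|u\|_{L^2(M)}
    \le C' \lambda^{-1/2} \log \lambda \cdot \|(\lambda + \Delta) u\|_{L^2(M)} + C \|u\|_{L^2(\omega)}
\end{equation*}
for all $\lambda \ge \lambda_0 := h_0^{-2}$. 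The remaining compact range $\lambda \in [1, \lambda_0]$ is absorbed into the constants (equivalently, into $K$ on the left-hand side of \eqref{eq::est-resolvent-laplacian}), yielding the desired bound with $\alpha(\lambda) = \lambda^{-1/2} \log \lambda$ for all $\lambda \ge 1$.

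Finally, since $\lambda^{-1/2} \log \lambda \to 0$ as $\lambda \to \infty$, the hypothesis \eqref{eq::limit-condition-alpha} is satisfied, and \cref{thm::main-schrodinger} delivers the observation estimate \eqref{eq::OBS} on $E \times \omega$ for every $E \subset \mathbb{R}$ of positive measure. The main obstacle in the argument is therefore purely external: it is the Dyatlov--Jin(--Nonnenmacher) resolvent estimate itself; once that is granted, the reduction to \cref{thm::main-schrodinger} is routine bookkeeping.
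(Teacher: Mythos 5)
Your proposal is correct and follows exactly the paper's route: quote the Dyatlov--Jin(--Nonnenmacher) semiclassical control estimate as a black box, rescale via $h=\lambda^{-1/2}$ to obtain \eqref{eq::est-resolvent-laplacian} with $\alpha(\lambda)=\lambda^{-1/2}\log\lambda$, note that \eqref{eq::limit-condition-alpha} holds, and apply \cref{thm::main-schrodinger}. The paper's own justification is just the citation plus the stated $\alpha$; your explicit rescaling and treatment of the compact range $\lambda\in[1,\lambda_0]$ only makes the same argument more detailed.
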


\begin{remark}
\label{remark2}
    In the following two cases, the resolvent bound \eqref{eq::est-resolvent-laplacian} holds with $\alpha(\lambda) = 1$ and cannot hold for $\alpha(\lambda) = \LittleO(1)$ as $\lambda \to \infty$:
    \begin{enumerate}[label=(\roman*)]
        \item $M = (\mathbb{R} / 2\pi \mathbb{Z})^2$ is a two-dimensional torus, and for some $a,b \in (0,2\pi)$,
        \begin{equation*}
            \omega \cap (0,2\pi)^2 = \{(x,y) : a < x < b,\ 0 < y < 2\pi\}.
        \end{equation*}
        A simple example that violates \eqref{eq::est-resolvent-laplacian} with $\alpha(\lambda) = \LittleO(1)$ is $\lambda = k^2$ and $u(x,y) = \chi(x) e^{ik y}$ where $k \in \mathbb{Z}$ and $\chi \in C^\infty(\mathbb{T})$ is a nontrivial function supported outside $(a,b)$.
        \item $M = \{x \in \mathbb{R}^2 : |x| \le 1\}$ is the unit ball in $\mathbb{R}^2$ and, for some $r_0 \in (0,1)$ and $\theta_0 \in (0,2\pi)$,
        \begin{equation*}
            \omega = \{ r e^{i\theta} : r_0 < r < 1,\ 0 <  \theta < \theta_0 \}.
        \end{equation*}
        Examples that violate \eqref{eq::est-resolvent-laplacian} with $\alpha(\lambda) = \LittleO(1)$ can be constructed similarly.
    \end{enumerate}

    In these cases, while observability and controllability are known for arbitrarily small time intervals (see \cite{Jaffard, Komornik, AnantharamanMacia2016}), our proof appears insufficient to derive the observation estimate from time sets of positive measure using the resolvent bound.
    Nevertheless, we recently showed that the observation estimate \eqref{eq::OBS} holds when $\omega$ is open and $E$ has positive measure, and for $d \le 2$, it suffices for $\omega$ to have positive measure \cite{BurqZhu2025tori}. 

    The only other situation where \cref{thm::main-schrodinger} does not apply, yet the Schrödinger propagator remains observable and controllable from time intervals, is the case of the ball \cite{AnantharamanMacia2016}.
    We strongly suspect that revisiting \cite{AnantharamanMacia2016} in the spirit of our work \cite{BurqZhu2025tori} would also allow replacing the time interval by any time set of positive measure.
\end{remark}

\section{Proof}

In this section, we prove \cref{thm::main-schrodinger}, assuming throughout that the resolvent bound \eqref{eq::est-resolvent-laplacian} and the limit condition \eqref{eq::limit-condition-alpha} hold.
Our proof relies on three ingredients:
\begin{enumerate}[label=(\roman*)]
    \item The assumption \eqref{eq::limit-condition-alpha}, which enables us to prove high-frequency observation estimates on time intervals with constants independent of the interval length (see \cref{lem::OBS-SC-short-time});
    \item Measure-theoretical results such as the Lebesgue differentiation theorem and the Severini--Egorov theorem; and
    \item The unitarity of the Schrödinger propagator $e^{it\Delta}$
\end{enumerate}
This approach also applies, at least in the high-frequency regime, to Schrödinger propagators with potentials and to fractional Schrödinger propagators.
However, resolvent bounds for the latter do not appear to be known.

\smallskip

For simplicity, we adopt the following notations:
\begin{itemize}
    \item For any set of parameters $A$, we denote by $C_A$ a finite and positive constant depending solely on $A$.
    We also write $f \lesssim_A g$ if $f \le C_A g$.
    \item We use $\fint$ for averaged integrals.
    That is, if $a < b$ and $g \in L^1(a,b)$, then
    \begin{equation*}
        \fint_a^b g(t) \diff t = \frac{1}{b-a} \int_a^b g(t) \diff t.
    \end{equation*}
\end{itemize}

\subsection{Semiclassical Observability}

First, we establish a semiclassical observation estimate.
Fix a nonnegative and even cutoff function $\phi \in C_c^\infty(\mathbb{R} \setminus \{0\})$ such that $\phi(z)=1$ when $|z| \in (1,2)$.
We also denote $\phi_h(z) = \phi(h^2 z)$ for $h > 0$, where the relation between the semiclassical parameter $h$ and the parameter $\lambda$ in the resolvent bound \eqref{eq::est-resolvent-laplacian} is 
\begin{equation*}
    h = \lambda^{-1/2}.
\end{equation*}

\begin{lemma}
\label{lem::OBS-SC-short-time}
    There exists $C>0$ such that, if $\delta > 0$, then for some $h_\delta > 0$, for all $h \in (0,h_\delta)$, for all $s \in \mathbb{R}$, and for all $u \in L^2(M)$, there holds the semiclassical observation estimate
    \begin{equation}
    \label{eq::OBS-SC-short-time}
        \|\phi_h(\Delta) u\|_{L^2(M)}^2
        \leq C \fint_{s-\delta}^{s+\delta} \|e^{it\Delta} \phi_h(\Delta) u\|_{L^2(\omega)}^2 \diff t.
    \end{equation}
\end{lemma}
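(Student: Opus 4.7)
The plan is to transfer the resolvent estimate \eqref{eq::est-resolvent-laplacian} into a time--frequency estimate via Plancherel, exploiting that the time Fourier transform of $v(t) := e^{it\Delta}\phi_h(\Delta) u$ is concentrated at frequencies $\tau \sim -h^{-2}$, exactly the regime where \eqref{eq::limit-condition-alpha} permits absorbing the error term. By the unitarity of $e^{it\Delta}$ and translation in $t$, one may reduce to $s=0$. Fix a profile $\chi_0 \in C_c^\infty((-1,1))$ with $\|\chi_0\|_{L^2(\mathbb{R})}=1$ and set $\chi_\delta(t)=\chi_0(t/\delta)$, so that $\|\chi_\delta\|_{L^2}^2=\delta$ and $\|\chi_\delta'\|_{L^2}^2=\|\chi_0'\|_{L^2}^2/\delta$. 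Let $w(t) = \chi_\delta(t) v(t)$.

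From the Schrödinger equation one has $\partial_t w - i\Delta w = \chi_\delta' v$; taking the Fourier transform in $t$ converts this to
\begin{equation*}
    (\lambda + \Delta)\,\hat w(\tau) = i\,\widehat{\chi_\delta' v}(\tau), \qquad \lambda := -\tau.
\end{equation*}
For $\tau \le -1$, the hypothesis \eqref{eq::est-resolvent-laplacian} applies with this $\lambda$; squaring, integrating over $\tau \le -1$, and invoking Plancherel yields
\begin{multline*}
    K^2 \int_{-\infty}^{-1} \|\hat w(\tau)\|_{L^2(M)}^2 \diff\tau \\
    \le 2\int_{-\infty}^{-1} \alpha(-\tau)^2 \|\widehat{\chi_\delta' v}(\tau)\|_{L^2(M)}^2 \diff\tau + 2\int_{\mathbb{R}} \|\hat w(\tau)\|_{L^2(\omega)}^2 \diff\tau.
\end{multline*}
The observation integral on the right is $\le 4\pi \|\chi_0\|_\infty^2 \int_{-\delta}^{\delta} \|v(t)\|_{L^2(\omega)}^2 \diff t$ by Plancherel and the support of $\chi_\delta$.

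Next, via the spectral expansion $\phi_h(\Delta) u = \sum_j c_j e_j$ with $-\Delta e_j = \mu_j e_j$ and $c_j = \phi(h^2 \mu_j)\langle u, e_j\rangle$ vanishing unless $\mu_j \gtrsim h^{-2}$, one diagonalizes $\hat w(\tau) = \sum_j c_j \hat\chi_\delta(\tau+\mu_j) e_j$ and likewise for $\widehat{\chi_\delta' v}$. Since $\hat\chi_\delta(\sigma) = \delta\,\hat\chi_0(\delta\sigma)$ is Schwartz-localized at scale $\delta^{-1}$ while $\mu_j \gtrsim h^{-2}$, for $h$ small enough the full $L^2$-mass of $\hat w$ concentrates in $\tau \le -1$, yielding $\int_{-\infty}^{-1} \|\hat w(\tau)\|_{L^2(M)}^2 \diff\tau \ge \pi\delta\,\|\phi_h(\Delta) u\|_{L^2(M)}^2$ after discarding a rapidly decaying tail.

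The main obstacle is bounding the $\alpha$-term small enough to absorb it into the left-hand side. Substituting $\sigma = \tau+\mu_j$ in each summand and splitting at $|\sigma| = \mu_j/2$, the range $|\sigma| \le \mu_j/2$ is controlled by $\sup_{\lambda\ge 1/(4h^2)}\alpha(\lambda)^2 \to 0$ (from \eqref{eq::limit-condition-alpha}) against $\|\widehat{\chi_\delta'}\|_{L^2}^2$, while the range $|\sigma| > \mu_j/2$ is handled by the Schwartz decay of $\hat\chi_0$ at scale $\delta\sigma \gtrsim \delta h^{-2} \to \infty$. Together these give
\begin{equation*}
    \int_{-\infty}^{-1} \alpha(-\tau)^2 \|\widehat{\chi_\delta' v}(\tau)\|_{L^2(M)}^2 \diff\tau \le \varepsilon_h \cdot \frac{\|\chi_0'\|_{L^2}^2}{\delta}\, \|\phi_h(\Delta) u\|_{L^2(M)}^2
\end{equation*}
for some $\varepsilon_h \to 0$ as $h \to 0$. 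The factor $\delta^{-1}$ is dangerous against the $\delta$-proportional lower bound for the left-hand side, but by choosing $h_\delta$ so small that $\varepsilon_h \ll \delta^2$ --- possible precisely by \eqref{eq::limit-condition-alpha} --- this term is absorbed. Rearranging yields the lemma with a constant $C = C(K, \chi_0)$ depending only on $K$ and the fixed profile $\chi_0$, independent of $\delta$ and $s$.
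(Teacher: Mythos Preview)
Your argument is correct and is essentially the paper's own, made explicit: the paper simply cites \cite[Theorem~4]{BurqZworski2004resolvent-control}---which yields \eqref{eq::OBS-SC-short-time} whenever $\delta \ge T\alpha(h^{-2})$, proved there by exactly the time-Fourier/cutoff method you carry out---and then invokes the limit condition \eqref{eq::limit-condition-alpha} to meet that threshold for $h$ small. One small caveat: in bounding the tail $|\sigma|>\mu_j/2$ you tacitly use that $\alpha$ is bounded on $[1,\infty)$, which is not literally part of the hypothesis but can be arranged without loss of generality (a perturbation-and-compactness argument on $[1,\Lambda_0]$ once $\alpha$ is small on $[\Lambda_0,\infty)$).
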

\begin{remark}
    The important point in~\eqref{eq::OBS-SC-short-time} is the fact that the high-frequency constant $C$ is uniform for all $\delta > 0$. This is where we use the assumption~\eqref{eq::limit-condition-alpha}.
\end{remark}
\begin{proof}[Proof of \cref{lem::OBS-SC-short-time}]
    By \cite[Theorem~4]{BurqZworski2004resolvent-control}, for some $T > 0$ and $h_0 > 0$, if $h \in (0,h_0)$ and
    \begin{equation}
        \label{eq::BZ-est-condition}
        \delta \ge T \alpha(h^{-2}),
    \end{equation}
    then the semiclassical observation estimate \eqref{eq::OBS-SC-short-time} holds.    
    Indeed, although the referred theorem was stated only for fixed $s$ (in fact $s=0$), the uniformity in $s$ follows from the unitarity of the Schrödinger propagator $e^{it\Delta}$.
    To conclude, notice that by the limit condition \eqref{eq::limit-condition-alpha}, the requirement \eqref{eq::BZ-est-condition} is met when $h$ is small.
\end{proof}

\subsection{Weak Observability}

Next, from the semiclassical observation estimate \eqref{eq::OBS-SC-short-time}, we derive a weak observation estimate using the Littlewood--Paley theory.
Since $\omega$ is only measurable, we will use a temporal version of the theory following \cite{BurqZworski2012control}, which in turn was inspired by the semiclassical approach of Lebeau~\cite{Lebeau1992}.

\begin{lemma}
    There exists $C>0$ such that, if $\delta > 0$, then for some $C_\delta > 0$, for all $s \in \mathbb{R}$, and for all $u \in L^2(M)$, there holds the weak observation estimate
    \begin{equation}
    \label{eq::OBS-weak-short-time}
        \|u\|_{L^2(M)}^2
        \leq C \fint_{s-\delta}^{s+\delta} \|e^{it\Delta} u\|_{L^2(\omega)}^2 \diff t + C_\delta \|u\|_{H^{-2}(M)}^2.
    \end{equation}
\end{lemma}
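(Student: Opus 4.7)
The plan is to pass from the semiclassical observation of Lemma~\ref{lem::OBS-SC-short-time} to a weak observation on all of $L^2(M)$ through a Littlewood--Paley decomposition in the spectrum of $\Delta$, and to sum the resulting dyadic estimates via a \emph{temporal} Littlewood--Paley argument in the spirit of \cite{BurqZworski2012control}. Fix $\delta > 0$. Choose dyadic scales $h_j = 2^{-j}$ such that $h_j < h_\delta$ for all $j \geq J_0 = J_0(\delta)$, together with a spectral partition of unity $1 = \psi_\delta(\lambda) + \sum_{j \geq J_0} \phi_{h_j}(\lambda)$ on $\lambda \geq 0$, where $\psi_\delta \in C_c^\infty([0,\infty))$ collects all low and intermediate frequencies in a cutoff whose support has $\delta$-dependent size. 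By spectral orthogonality,
\begin{equation*}
    \|u\|_{L^2(M)}^2 \lesssim \|\psi_\delta(\Delta) u\|_{L^2(M)}^2 + \sum_{j \geq J_0} \|\phi_{h_j}(\Delta) u\|_{L^2(M)}^2,
\end{equation*}
and $\|\psi_\delta(\Delta) u\|_{L^2}^2 \leq C_\delta \|u\|_{H^{-2}(M)}^2$ directly, which already accounts for the error term in \eqref{eq::OBS-weak-short-time}.

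For each high-frequency block, Lemma~\ref{lem::OBS-SC-short-time} (applied with $\delta/2$ in place of $\delta$, which is permissible since the constant $C$ is uniform in $\delta$) gives
\begin{equation*}
    \|\phi_{h_j}(\Delta) u\|_{L^2(M)}^2 \leq C \fint_{s - \delta/2}^{s + \delta/2} \|e^{it\Delta} \phi_{h_j}(\Delta) u\|_{L^2(\omega)}^2 \diff t, \qquad j \geq J_0.
\end{equation*}
The key step is to reinterpret the spectral cutoff as a temporal Fourier multiplier: since $v(t) = e^{it\Delta} u$ satisfies $-i\partial_t v = \Delta v$ and $\phi$ is even, one has $\phi_{h_j}(\Delta) v = \phi_{h_j}(-i\partial_t) v$. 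Fix a smooth time cutoff $\tilde\chi \in C_c^\infty((s - \delta, s + \delta))$ with $\tilde\chi \equiv 1$ on $[s - \delta/2, s + \delta/2]$ and write
\begin{equation*}
    \tilde\chi(t)\, \phi_{h_j}(-i\partial_t) v = \phi_{h_j}(-i\partial_t)(\tilde\chi v) + [\tilde\chi, \phi_{h_j}(-i\partial_t)] v.
\end{equation*}
The main term is handled by the classical 1D Littlewood--Paley theorem, applied to $t \mapsto \tilde\chi(t) v(t,x) \in L^2(\mathbb{R})$ pointwise in $x \in \omega$ and integrated by Fubini, yielding
\begin{equation*}
    \sum_{j \geq J_0} \int \|\phi_{h_j}(-i\partial_t)(\tilde\chi v)\|_{L^2(\omega)}^2 \diff t
    \lesssim \int \tilde\chi(t)^2 \|e^{it\Delta} u\|_{L^2(\omega)}^2 \diff t
    \leq 2 \delta \fint_{s - \delta}^{s + \delta} \|e^{it\Delta} u\|_{L^2(\omega)}^2 \diff t,
\end{equation*}
which, combined with the $1/\delta$ arising from the averaged integral on the block estimate, produces the main term of \eqref{eq::OBS-weak-short-time}.

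The hard part will be absorbing the commutator error $\sum_{j \geq J_0} \int \|[\tilde\chi, \phi_{h_j}(-i\partial_t)] v\|_{L^2(\omega)}^2 \diff t$ into $C_\delta \|u\|_{H^{-2}}^2$. Semiclassical calculus (with $h_j^2$ as the small parameter dual to $\tau$) identifies the principal part of $[\tilde\chi, \phi_{h_j}(-i\partial_t)]$ as the operator of symbol $(h_j^2/i) \tilde\chi'(t) \phi'(h_j^2 \tau)$, whose action on $v$ is $(h_j^2/i) \tilde\chi'(t) e^{it\Delta} \phi'(h_j^2 \Delta) u$. The leading contribution is then controlled by $(h_j^4/\delta) \|\phi'(h_j^2 \Delta) u\|_{L^2(M)}^2$; since $\phi'(h_j^2 \Delta)$ is spectrally concentrated on the dyadic band $\lambda_k \sim h_j^{-2}$, the factor $h_j^4$ becomes $\sim \lambda_k^{-2}$, so the sum telescopes and gives a bound by $C_\delta \|u\|_{H^{-2}(M)}^2$. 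The technical burden is to treat the subleading semiclassical remainders and to justify applying Littlewood--Paley pointwise in $x$, which requires a secondary time cutoff $\tilde\chi_1 \equiv 1$ on $\operatorname{supp} \tilde\chi$ to replace $v$ by a genuine $L^2_t$ function, the error from the far field being negligible thanks to the Schwartz decay of the convolution kernels of $\phi_{h_j}(-i\partial_t)$ at scale $h_j^2$.
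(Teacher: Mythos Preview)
Your proposal is correct and follows essentially the same route as the paper: both arguments pass from the semiclassical estimate to the weak observation by a dyadic spectral decomposition, convert $\phi_h(\Delta)$ into the temporal Fourier multiplier $\phi_h(i\partial_t)$, introduce a compactly supported time cutoff, and sum via one-dimensional Littlewood--Paley in $t$ (the scheme of \cite{BurqZworski2012control,Lebeau1992}).

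The one technical difference worth noting concerns the commutator. The paper inserts an auxiliary frequency cutoff $\tilde\phi$ with $\tilde\phi\phi=\phi$, so that the commutator is always applied to $\tilde\phi_h(i\partial_t)V$, a function already spectrally localized to $\lambda\sim h^{-2}$; this makes the passage from the commutator bound to the $H^{-2}$ error immediate and avoids having to track the spectral support of the remainder in the pseudodifferential expansion. Your scheme instead reads the spectral localization off the explicit principal symbol $h_j^2\,\tilde\chi'(t)\phi'(h_j^2\tau)$, which is fine for the leading term and for each subsequent term of the expansion (all derivatives $\phi^{(k)}$ are compactly supported), but leaves the genuine semiclassical remainder $R_N$ without an obvious spectral cutoff. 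This is exactly the ``technical burden'' you flag; the cleanest fix is to insert $\tilde\phi_{h_j}$ on the right as the paper does, after which the remainder contributes $O_{\delta}(h_j^{4})\|\tilde\phi_{h_j}(\Delta)u\|_{L^2}^2$ per block and sums directly to $C_\delta\|u\|_{H^{-2}}^2$.
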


\begin{proof}
    Let $\chi,\tilde{\chi} \in C_c^\infty(\mathbb{R})$ be such that $\chi(0) \ne 0$ and
    \begin{equation}
        \label{eq::chi-supp-condition}
        \supp \chi \subset \{ \tilde{\chi} = 1\}.
    \end{equation}
    Put $\chi_\delta(t) = \delta^{-1} \chi(t/\delta)$ and $\tilde{\chi}_\delta(t) = \delta^{-1} \tilde{\chi}(t/\delta)$.
    Let $\tilde{\phi} \in C_c^\infty(\mathbb{R} \setminus \{0\})$ be such that $\tilde{\phi} \phi = \phi$, and define $\tilde{\phi}_r(z) = \tilde{\phi}(h^2 z)$.
    We have the conjugation relation
    \begin{equation*}
        e^{i(t+s)\Delta} \phi_h(\Delta) = \phi_h(i\partial_t) e^{i(t+s)\Delta}.
    \end{equation*}
    We also have the commutator relation
    \begin{equation*}
        \chi_\delta \phi_h(i\partial_t) 
        = \tilde{\chi}_\delta\phi_h(i\partial_t) \chi_\delta  \tilde{\phi}_h(i\partial_t) + \tilde{\chi}_\delta[\chi_\delta,\phi_h(i\partial_t)]   \tilde{\phi}_h(i\partial_t).
    \end{equation*}
    Using the two identities above and \cref{lem::OBS-SC-short-time}, we deduce that, for all $h \in (0,h_\delta)$, all $s\in \mathbb{R}$, and all $u \in L^2(M)$, there holds
    \begin{equation}
    \label{eq::OBS-SC-chi}
    \begin{split}
        \|\phi_h(\Delta) u\|_{L^2(M)}^2
        & \lesssim \int_{\mathbb{R}} \|\chi_\delta(t) e^{i(t+s)\Delta} \phi_h(\Delta) u\|_{L^2(\omega)}^2 \diff t \\
        &\lesssim \int_\omega \int_{\mathbb{R}} \bigl|\phi_h(i\partial_t) \bigl(\chi_\delta(t) e^{i(t+s)\Delta} u(x)\bigr) \bigr|^2 \diff t \diff x \\
        & \qquad + \int_\omega \int_{\mathbb{R}} \bigl| \tilde{\chi}_\delta(t)  [\phi_h(i\partial_t),\chi_\delta(t) ] \tilde{\phi}_h(i\partial_t) \bigl(e^{i(t+s)\Delta} u(x)\bigr) \bigr|^2  \diff t \diff x.
    \end{split}
    \end{equation}
    
    By the semiclassical calculus (see e.g., \cite{Zworski2012book}) and the support condition \eqref{eq::chi-supp-condition}, for all $N \ge 1$ and all $h \in (0,1)$, there holds
    \begin{equation}
    \label{eq::commutator-est}
        \bigl\| \tilde{\chi}_\delta [\phi_h(i\partial_t),\chi_\delta] \bigr\|_{L^2(\mathbb{R}) \to L^2(\mathbb{R})}
        \lesssim_{\delta,N} h^N.
    \end{equation}
    Let $h = 2^{-j/2}$ and sum up \eqref{eq::OBS-SC-chi} for all $j \in \mathbb{N}$ satisfying $2^{j/2} < h_\delta$.
    By the Littlewood--Paley theory (we use a spatial theory on the left hand side and a temporal theory on the right hand side), for some $\psi \in C_c^\infty(\mathbb{R})$, there holds
    \begin{equation*}
        \|u\|_{L^2(M)}^2
        \lesssim \int_{\mathbb{R}} \| \chi_\delta(t) e^{i(t+s) \Delta} u \|_{L^2(\omega)}^2 \diff t
        + C_\delta \|u\|_{H^{-2}(M)}^2 + \|\psi(h_\delta^2\Delta) u\|_{L^2(M)}^2.
    \end{equation*}
    To conclude, we use the estimate
    \begin{equation*}
        \|\psi(h_\delta^2\Delta)\|_{H^{-2}(M) \to L^2(M)} \lesssim h_\delta^{-2}.
        \qedhere
    \end{equation*}
\end{proof}

We further derive a weak observation estimate from the measurable time set $E$.

\begin{lemma}
\label{lem::OBS-weak-measurable}
    There exists a subset $J \subset E$ of positive measure, and constants $C>0$ and $\delta_0 > 0$ such that, if $\delta \in (0,\delta_0)$, then for some constant $C_\delta > 0$, for all $s \in J$, and for all $u \in L^2(M)$, there holds the weak observation estimate
    \begin{equation}
    \label{eq::OBS-weak-measurable}
        \|u\|_{L^2(M)}^2
        \leq C \fint_{s-\delta}^{s+\delta} \bm{1}_E(t) \|e^{it\Delta} u\|_{L^2(\omega)}^2 \diff t + C_\delta \|u\|_{H^{-2}(M)}^2.
    \end{equation}
\end{lemma}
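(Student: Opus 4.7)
The plan is to deduce \cref{lem::OBS-weak-measurable} from the previous (unrestricted) weak observation estimate by replacing the full interval integral by the integral against $\bm{1}_E$ at cost of a controllable error. The two ingredients will be the Lebesgue density theorem, applied to the indicator $\bm{1}_E$, and the Severini--Egorov theorem, which will upgrade pointwise density convergence to uniform convergence on a positive-measure subset of $E$.

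More precisely, I would first set $g_u(t) = \|e^{it\Delta} u\|_{L^2(\omega)}^2$ and observe the crucial trivial bound
\begin{equation*}
    g_u(t) \le \|e^{it\Delta} u\|_{L^2(M)}^2 = \|u\|_{L^2(M)}^2
\end{equation*}
which comes from the unitarity of $e^{it\Delta}$ and is the only place where $u$ enters nontrivially. Splitting $1 = \bm{1}_E + \bm{1}_{E^c}$ inside the previous weak observation estimate gives
\begin{equation*}
    \|u\|_{L^2(M)}^2 \le C \fint_{s-\delta}^{s+\delta} \bm{1}_E(t) g_u(t) \diff t + C \,\|u\|_{L^2(M)}^2 \cdot \frac{|E^c \cap (s-\delta,s+\delta)|}{2\delta} + C_\delta \|u\|_{H^{-2}(M)}^2,
\end{equation*}
so the task reduces to making the middle term at most $\tfrac12 \|u\|_{L^2(M)}^2$ by choosing $s$ and $\delta$ appropriately; then the standard absorption argument delivers \eqref{eq::OBS-weak-measurable} with the constant $2C$.

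To make this choice work with a single threshold $\delta_0$ valid for all $s$ in a common positive-measure set (and independent of $u$), I would let $E_*$ denote the set of Lebesgue points of density $1$ of $E$ that lie in $E$, which has full measure in $E$ by the Lebesgue differentiation theorem. On $E_*$ the functions
\begin{equation*}
    \rho_\delta(s) = \frac{|E^c \cap (s-\delta,s+\delta)|}{2\delta}
\end{equation*}
tend to $0$ pointwise as $\delta \to 0^+$. Since $E_*$ has positive finite measure (after intersecting with any bounded interval, if needed), the Severini--Egorov theorem produces a measurable $J \subset E_*$ of positive measure on which $\rho_\delta \to 0$ uniformly. Choosing $\delta_0 > 0$ such that $\rho_\delta(s) \le 1/(2C)$ for all $s \in J$ and $\delta \in (0,\delta_0)$ completes the proof.

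The routine parts are the splitting and absorption; the only conceptual step is recognising that pointwise density at each point of $E$ is not enough because the constant in \eqref{eq::OBS-weak-measurable} must be uniform in $s \in J$, which is exactly what Severini--Egorov provides. I do not anticipate a serious obstacle, since the unitarity bound $g_u(t) \le \|u\|_{L^2(M)}^2$ decouples the dependence on $u$ from the temporal averaging, allowing the choice of $J$ and $\delta_0$ to be made once and for all.
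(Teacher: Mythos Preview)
Your proposal is correct and follows essentially the same approach as the paper: split $1=\bm{1}_E+\bm{1}_{E^c}$ in the weak observation estimate \eqref{eq::OBS-weak-short-time}, bound the $E^c$-contribution by $\rho_\delta(s)\|u\|_{L^2(M)}^2$ via unitarity, invoke the Lebesgue density theorem together with Severini--Egorov to make $\rho_\delta$ uniformly small on a positive-measure $J\subset E$, and absorb. The only cosmetic difference is that the paper runs Egorov along the dyadic sequence $\delta=2^{-n}$ (Egorov being a statement about sequences) and then notes this suffices, whereas you phrase it for the continuous family $\rho_\delta$; the passage from one to the other is immediate since $\rho_\delta(s)\le 2\,\rho_{2^{-n}}(s)$ for $\delta\in[2^{-n-1},2^{-n}]$.
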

\begin{proof}
    It suffices to find $J$ and establish \eqref{eq::OBS-weak-measurable} for $\delta = 2^{-n}$ when $n \in \mathbb{N}$ is large.
    To show this, consider the sequence of functions $(f_n)_{n \ge 0}$ defined by
    \begin{equation*}
        f_n(s) = 1-2^{n-1} |E \cap [s-2^{-n},s+2^{-n}]|.
    \end{equation*}
    By the Lebesgue differentiation theorem, we have the almost everywhere convergence
    \begin{equation}
        \label{eq::f-ae-convergence}
        \lim_{n \to \infty} f_n = \bm{1}_{\mathbb{R} \setminus E}.
    \end{equation}
    By the Severini--Egorov theorem, there exists a subset $J \subset E$ of positive measure, such that the convergence \eqref{eq::f-ae-convergence} is uniform on $J$.
    Therefore,
    \begin{equation}
        \label{eq::OBS-weak-remainder-est}
        \sup_{s \in J} \fint_{s-2^{-n}}^{s+2^{-n}} \bm{1}_{\mathbb{R} \setminus E}(t) \| e^{it \Delta} u\|_{L^2(\omega)}^2 \diff t
        \le \|f_n\|_{L^\infty(J)} \|u\|_{L^2(M)}^2 = \LittleO(1)_{n \to \infty} \|u\|_{L^2(M)}^2.
    \end{equation}
    Therefore, by the estimate \eqref{eq::OBS-weak-remainder-est} and the weak observation estimate \eqref{eq::OBS-weak-short-time}, for all $n \ge 0$, there exists a constant $C_n > 0$, such that
    \begin{equation}
        \label{eq::OBS-weak-dyadic-with-remainder}
        \begin{split}
            \|u\|_{L^2(M)}^2
            & \le C \fint_{s-2^{-n}}^{s+2^{-n}}  \Bigl( 1_E (t) + 1_{\mathbb{R} \setminus E} (t) \Bigr) \|e^{it\Delta} u\|_{L^2(\omega)}^2 \diff t  + C_n \|u\|_{H^{-2}(M)}^2\\
            & \le C \biggl( \fint_{s-2^{-n}}^{s+2^{-n}}  1_E (t) \|e^{it\Delta} u\|_{L^2(\omega)}^2 \diff t + \LittleO(1)_{n \to \infty} \|u\|_{L^2(M)}^2 \biggr) +C_n \|u\|_{H^{-2}(M)}^2.
        \end{split}
    \end{equation} 
    When $n$ is large such that $C \LittleO(1)_{n \to \infty} \le 1/2$, we deduce from the estimate \eqref{eq::OBS-weak-dyadic-with-remainder} that
    \begin{equation*}
        \|u\|_{L^2(M)}^2
        \leq 2C \fint_{s-2^{-n}}^{s+2^{-n}}  1_E (t) \|e^{it\Delta} u\|_{L^2(\omega)}^2 \diff t +2C_n \|u\|_{H^{-2}(M)}^2.
        \qedhere
    \end{equation*}
\end{proof}

\subsection{Uniqueness-Compactness Argument}

We now establish the (strong) observation estimate \eqref{eq::OBS} and thus finish the proof of \cref{thm::main-schrodinger}.

Proceeding with the uniqueness-compactness argument by Bardos, Lebeau and Rauch \cite{BardosLebeauRauch1992}, we assume by contradiction that the observation estimate \eqref{eq::OBS} fails, and choose a sequence $(u_n)_{n \ge 0}$ in $L^2(M)$ which satisfies $\|u_n\|_{L^2(M)} = 1$ and 
\begin{equation}
\label{eq::OBS-vanishing}
    \int_E \|e^{it\Delta} u_n\|_{L^2(\omega)}^2 \diff t
    = \int \bm{1}_E(t) \|e^{it\Delta} u_n\|_{L^2(\omega)}^2 \diff t
    = \LittleO(1)_{n \to \infty}.
\end{equation}

By Fubini's theorem, if $J \subset \mathbb{R}$ has positive measure and $\delta > 0$, then  averaging \eqref{eq::OBS-vanishing} with respect to $s \in J$ yields
\begin{equation}
\label{eq::OBS-vanishing-short-time}
    \fint_J \biggl( \fint_{s-\delta}^{s+\delta} \bm{1}_E(t)  \| e^{it \Delta} u_n\|_{L^2(\omega)}^2 \diff t \biggr) \diff s
    \le \fint_J \bm{1}_E(s)  \| e^{is \Delta} u_n\|_{L^2(\omega)}^2 \diff s
    = \LittleO(1)_{n \to \infty}.
\end{equation}
If in addition $J \subset E$ is given by \cref{lem::OBS-weak-measurable}, then averaging \eqref{eq::OBS-weak-measurable} with respect to $s \in J$ and using the weak observation estimate \eqref{eq::OBS-vanishing-short-time}, we deduce that, for all $\delta \in (0,\delta_0)$ and all sufficiently large $n$, there holds the lower bound
\begin{equation}
\label{eq::weak-OBS-implies-nonvanishing}
    C_\delta \|u_n\|_{H^{-2}(M)}^2
    \gtrsim \|u_n\|_{L^2(M)}^2 - \fint_J \biggl( \fint_{s-\delta}^{s+\delta} \| e^{it \Delta} u_n\|_{L^2(\omega)}^2 \diff t \biggr) \diff s
    \gtrsim 1.
\end{equation}

Up to extracting a subsequence, we can assume that  the sequence $(u_n)_{n \ge 0}$ converges weakly in $L^2(M)$, and thus strongly in $H^{-2}(M)$, to some $u \in L^2(M)$.
The hypothesis \eqref{eq::OBS-vanishing} implies $e^{it\Delta} u(x) = 0$ for almost every $(t,x) \in E \times \omega$.
The lower bound \eqref{eq::weak-OBS-implies-nonvanishing} implies $u \ne 0$.
To finish the proof, it suffices to show that such $u$ does not exist.

\begin{lemma}
\label{lem::unique-continuation}
    If $u \in L^2(M)$ satisfies $e^{it\Delta} u(x) = 0$ for almost every $(t,x) \in E \times \omega$, where $E \subset \mathbb{R}$ and $\omega \subset M$ are subsets of positive measure, then $u = 0$.
\end{lemma}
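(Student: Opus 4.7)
My plan is to reduce the problem to a rigidity statement for bounded holomorphic functions by extending the Schrödinger group to the heat semigroup in the right complex half-plane. For each $\varphi \in L^2(M)$ supported in $\omega$, I would introduce the scalar function
\begin{equation*}
    F_\varphi(\zeta) := \langle e^{\zeta\Delta} u, \varphi\rangle_{L^2(M)}, \qquad \Re \zeta \ge 0.
\end{equation*}
Since the spectrum of the self-adjoint operator $\Delta$ lies in $(-\infty, 0]$, the family $\{e^{\zeta\Delta}\}_{\Re\zeta \ge 0}$ is a contractive analytic semigroup on $L^2(M)$, so $F_\varphi$ is bounded, holomorphic on the open right half-plane, and continuous up to the imaginary axis, with boundary trace $F_\varphi(it) = \langle e^{it\Delta} u, \varphi\rangle$. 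By Fubini, the hypothesis yields $F_\varphi(it) = 0$ for almost every $t \in E$, a set of positive Lebesgue measure on $i\mathbb{R}$. The F.~and M.~Riesz theorem for bounded holomorphic functions on a half-plane (a standard consequence of the disk version via conformal mapping) then forces $F_\varphi \equiv 0$ throughout the closed half-plane.

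Evaluating at $\zeta = s > 0$ and letting $\varphi$ range over all of $L^2(M)$ supported in $\omega$ gives $e^{s\Delta} u = 0$ almost everywhere on $\omega$ for every $s > 0$. Grouping the spectral decomposition of $u$ by distinct eigenvalues $\nu_j \ge 0$ of $-\Delta$ with eigenprojections $P_j$, and testing against any $\psi \in L^2(\omega)$, one obtains the Dirichlet series identity
\begin{equation*}
    \sum_j e^{-s\nu_j} \langle P_j u, \psi\rangle_{L^2(\omega)} = 0 \quad \text{for all } s > 0.
\end{equation*}
Since $\{\nu_j\}$ is a discrete, increasing sequence tending to $+\infty$, the standard uniqueness principle for Dirichlet series (iteratively isolate the smallest remaining exponent by multiplying through by $e^{s\nu_0}$ and letting $s \to +\infty$) forces every coefficient to vanish; hence $P_j u = 0$ almost everywhere on $\omega$ for each $j$.

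Finally, each $P_j u$ is a Laplace eigenfunction, smooth in the interior of $M$, that vanishes on the positive-measure set $\omega$. The classical fact that a nontrivial eigenfunction of the Laplace--Beltrami operator on a connected smooth Riemannian manifold has a nodal set of zero Lebesgue measure (via Aronszajn-type strong unique continuation, combined with a Lebesgue density argument showing that a smooth function vanishing with density one at a point vanishes to infinite order there) then gives $P_j u \equiv 0$ for every $j$, and hence $u = 0$. The main obstacle I anticipate is the F.~and M.~Riesz step: one must carefully certify that a bounded holomorphic function on the right half-plane whose continuous boundary trace vanishes on a positive-measure subset of the imaginary axis is identically zero. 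The remaining steps reduce to routine spectral manipulations once the vanishing of $e^{s\Delta} u$ on $\omega$ for every $s > 0$ is in hand.
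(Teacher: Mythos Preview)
Your argument is correct and follows the same overall architecture as the paper's: extend holomorphically into a half-plane, invoke a boundary uniqueness theorem to upgrade ``vanishing for $t\in E$'' to ``vanishing for all times'', and finish via the fact that nodal sets of Laplace eigenfunctions have measure zero. The execution differs in two places. First, the paper extends \emph{pointwise in $x$}: for almost every $x\in\omega$ the map $t\mapsto e^{it\Delta}u(x)$ lies in the Hardy space $H^2(\mathbb{C}^-)$, and the Lusin--Privalov theorem is applied. You instead pair against $\varphi\in L^2(\omega)$ and obtain a \emph{bounded} holomorphic function on the right half-plane, so the simpler $H^\infty$ boundary uniqueness suffices; this sidesteps the pointwise $\ell^2$ control on eigenfunction coefficients. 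Second, and more substantively, once vanishing for all times is known, the paper runs the Bardos--Lebeau--Rauch uniqueness--compactness machinery, which relies on the weak observation estimate \eqref{eq::OBS-weak-short-time} and hence on the standing resolvent hypothesis of the section. Your route through the heat semigroup and Dirichlet-series uniqueness isolates each eigenprojection $P_j u$ directly, with no appeal to any observation estimate; this makes the lemma genuinely self-contained and valid on any compact manifold, independent of the resolvent bound. One terminological quibble: the result you invoke is not the F.\ and M.\ Riesz theorem (which concerns absolute continuity of analytic measures) but the standard $H^\infty$ boundary uniqueness theorem, i.e., the same Lusin--Privalov-type statement the paper cites; the content is correct, only the name is off.
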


\begin{remark}
    The proof below follows the argument in our earlier work \cite{BurqZhu2025tori}, which was carried out on tori but extends to general compact manifolds.
    We also note that, by Fubini's theorem, the lemma's assumption can be relaxed to $e^{it\Delta} u(x) = 0$ for almost every $(t,x) \in \Omega$ where $\Omega \subset \mathbb{R} \times M$ has positive measure.
\end{remark}

\begin{proof}[Proof of \cref{lem::unique-continuation}]
    Let $\mathscr{N}$ be the space of all $u \in L^2(M)$ such that $e^{it\Delta} u(x) = 0$ for almost all $(t,x) \in E \times \omega$.
    The lemma is equivalent to saying that $\dim \mathscr{N} = 0$.
    
    First, we claim that, if $u \in \mathscr{N}$, then $e^{it\Delta} u(x) = 0$ for all $t \in \mathbb{R}$ and almost all $x \in \omega$.
    To prove this, let $(\lambda_j)_{j \ge 0}$ be all the distinct eigenvalues of $-\Delta$ with domain $H^1(M)$.
    Expand 
    \begin{equation*}
        u = \sum_{j \ge 0} f_j, \quad  -\Delta f_j = \lambda_j f_j.
    \end{equation*}
    By the orthogonality between eigenfunctions with distinct eigenvalues, we have
    \begin{equation*}
        \int_M \sum_{j \ge 0} |f_j(x)|^2 \diff x
        = \sum_{j \ge 0} \|f_j\|_{L^2(M)}^2
        = \|u\|_{L^2(M)}^2 < \infty.
    \end{equation*}
    Therefore $(f_n(x))_{n \ge 0} \in \ell^2$ for almost all $x \in M$.
    For those $x$, the temporal function
    \begin{equation*}
        t \mapsto e^{it\Delta } u(x) = \sum_{n \ge 0} f_n(x) e^{-it\lambda_n}
    \end{equation*}
    extends to a holomorphic function in the Hardy space $H^2(\mathbb{C}^-)$ where $\mathbb{C}^-$ is the half-plane of complex numbers with negative imaginary part.
    By Fatou's theorem, this extension converges non-tangentially almost everywhere to $e^{it\Delta} u(x)$.
    If in addition $x$ is a point in $\omega$ such that $e^{it\Delta} u(x)$ vanishes almost everywhere on $E$, then by the Lusin--Privalov uniqueness theorem \cite{LusinPrivalov1925} and the fact that $E$ has positive measure, we deduce that $e^{it\Delta} u(x) = 0$ for all $t \in \mathbb{R}$. 
    
    We can now come back to the standard uniqueness-compactness argument used in \cite{BardosLebeauRauch1992}.    
    By the weak observation estimate \eqref{eq::OBS-weak-short-time}, the $L^2(M)$-norm and the $H^{-2}(M)$-norm are equivalent on the space $\mathscr{N}$.
    This fact has two consequences:
    \begin{enumerate}[label=(\roman*)]
        \item First, the space $\mathscr{N}$ is finite dimensional.
        Indeed, the unit ball in $\mathscr{N}$ (with respect to any of the two norms), or more generally any bounded and closed subset of $\mathscr{N}$, is relatively compact.
        The claim follows by the Riesz lemma.
        \item Second, the space $\mathscr{N}$ is invariant by the Laplace--Beltrami operator $\Delta$.
        Indeed, for any $u \in \mathscr{N}$, applying the weak observation estimate \eqref{eq::OBS-weak-short-time} (where we fix any $\delta > 0$) to $v_\epsilon = (e^{i \epsilon  \Delta } u-  u) / \epsilon$ with $\epsilon > 0$, we have
        \begin{equation*}
            \| v_\epsilon \|_{L^2(M)} 
            \le C_\delta \| (e^{i \epsilon \Delta } u- u) / \epsilon \|_{H^{-2}(M)}
            \le C'_\delta \| u\|_{L^2(M)},
        \end{equation*}
        where we used the fact that since $e^{it\Delta} u$ vanishes on $\mathbb{R} \times\omega$, so is $e^{it\Delta} v_\epsilon$ for all $\epsilon > 0$.
        Passing to the limit $\epsilon \to 0$, we deduce that $\Delta u\in L^2$ and $e^{it\Delta} (\Delta u)=  -i\partial_t e^{it\Delta} u$ vanishes on $\mathbb{R} \times \omega$.
        By the definition of $\mathscr{N}$, we have $\Delta u \in \mathscr{N}$.
    \end{enumerate}
    Due to these two consequences, there exists an eigenfunction of $\Delta$ in $\mathscr{N}$.
    By the definition of $\mathscr{N}$, this eigenfunction must vanish almost everywhere on $\omega$, which contradicts the fact that nodal sets of such eigenfunctions have zero Lebesgue measure.
    In fact, their Hausdorff dimensions are at most $d-1$, due to the works \cite{Aronszajn1957unique-continuation,HardtSimon1989nodal}.
\end{proof}

\section*{Acknowledgement}

The authors acknowledge funding from the European Research Council (ERC) under the European Union's Horizon 2020 research and innovation programme (Grant agreement 101097172 -- GEOEDP). 

\bibliographystyle{amsplain}  % amsplain amsalpha
\bibliography{references}

\providecommand{\bysame}{\leavevmode\hbox to3em{\hrulefill}\thinspace}
\providecommand{\MR}{\relax\ifhmode\unskip\space\fi MR }
% \MRhref is called by the amsart/book/proc definition of \MR.
\providecommand{\MRhref}[2]{%
  \href{http://www.ams.org/mathscinet-getitem?mr=#1}{#2}
}
\providecommand{\href}[2]{#2}
\begin{thebibliography}{10}

\bibitem{AnantharamanMacia2016}
Nalini Anantharaman, Matthieu L\'eautaud, and Fabricio Maci\`a, \emph{Wigner measures and observability for the {S}chr\"odinger equation on the disk}, Invent. Math. \textbf{206} (2016), no.~2, 485--599. \MR{3570298}

\bibitem{ApraizEscauriaza2014}
J.~Apraiz, L.~Escauriaza, G.~Wang, and C.~Zhang, \emph{Observability inequalities and measurable sets}, J. Eur. Math. Soc. (JEMS) \textbf{16} (2014), no.~11, 2433--2475. \MR{3283402}

\bibitem{ApraizEscauriaza2013}
Jone Apraiz and Luis Escauriaza, \emph{Null-control and measurable sets}, ESAIM Control Optim. Calc. Var. \textbf{19} (2013), no.~1, 239--254. \MR{3023068}

\bibitem{Aronszajn1957unique-continuation}
N.~Aronszajn, \emph{A unique continuation theorem for solutions of elliptic partial differential equations or inequalities of second order}, J. Math. Pures Appl. (9) \textbf{36} (1957), 235--249.

\bibitem{BardosLebeauRauch1992}
Claude Bardos, Gilles Lebeau, and Jeffrey Rauch, \emph{Sharp sufficient conditions for the observation, control, and stabilization of waves from the boundary}, SIAM Journal on Control and Optimization \textbf{30} (1992), no.~5, 1024--1065.

\bibitem{BurqMoyano2023}
Nicolas Burq and Iv\'an Moyano, \emph{Propagation of smallness and control for heat equations}, J. Eur. Math. Soc. (JEMS) \textbf{25} (2023), no.~4, 1349--1377. \MR{4577966}

\bibitem{BurqZhu2025tori}
Nicolas Burq and Hui Zhu, \emph{Observability of schr\"odinger propagators on tori in rough settings}, arXiv preprint arXiv:2509.23965 [math.AP], 2025, \url{https://arxiv.org/abs/2509.23965}.

\bibitem{BurqZworski2004resolvent-control}
Nicolas Burq and Maciej Zworski, \emph{Geometric control in the presence of a black box}, J. Am. Math. Soc. \textbf{17} (2004), no.~2, 443--471.

\bibitem{BurqZworski2012control}
\bysame, \emph{Control for {Schr{\"o}dinger} operators on tori}, Math. Res. Lett. \textbf{19} (2012), no.~2, 309--324.

\bibitem{BurqZworski2019}
\bysame, \emph{Rough controls for {S}chr\"odinger operators on 2-tori}, Ann. H. Lebesgue \textbf{2} (2019), 331--347. \MR{3978391}

\bibitem{DyatlovJin2018semiclassical}
Semyon Dyatlov and Long Jin, \emph{Semiclassical measures on hyperbolic surfaces have full support}, Acta Math. \textbf{220} (2018), no.~2, 297--339.

\bibitem{DyatlovJinNonnenmacher2022control}
Semyon Dyatlov, Long Jin, and St{\'e}phane Nonnenmacher, \emph{Control of eigenfunctions on surfaces of variable curvature}, J. Am. Math. Soc. \textbf{35} (2022), no.~2, 361--465.

\bibitem{HardtSimon1989nodal}
Robert Hardt and Leon Simon, \emph{Nodal sets for solutions of elliptic equations}, J. Differential Geom. \textbf{30} (1989), no.~2, 505--522.

\bibitem{Jaffard}
S.~Jaffard, \emph{Contr\^ole interne exact des vibrations d'une plaque rectangulaire}, Portugal. Math. \textbf{47} (1990), no.~4, 423--429. \MR{1090480}

\bibitem{Komornik}
V.~Komornik, \emph{On the exact internal controllability of a {P}etrowsky system}, J. Math. Pures Appl. (9) \textbf{71} (1992), no.~4, 331--342. \MR{1176015}

\bibitem{LeBalchMartin2023}
Kévin Le~Balc'H and Jérémy Martin, \emph{Observability estimates for the schr{ö}dinger equation in the plane with periodic bounded potentials from measurable sets}, arxiv (2023), no.~2304.08050.

\bibitem{Lebeau1992}
G.~Lebeau, \emph{Contr\^ole de l'\'equation de {S}chr\"odinger}, J. Math. Pures Appl. (9) \textbf{71} (1992), no.~3, 267--291. \MR{1172452}

\bibitem{Lions1988control-v1}
J.-L. Lions, \emph{Contr{\^o}labilit{\'e} exacte, perturbations et stabilisation de syst{\`e}mes distribu{\'e}s. {Tome} 1: {Contr{\^o}labilit{\'e}} exacte. ({Exact} controllability, perturbations and stabilization of distributed systems. {Vol}. 1: {Exact} controllability)}, Rech. Math. Appl., vol.~8, Paris etc.: Masson, 1988.

\bibitem{LusinPrivalov1925}
N.~Lusin and J.~Priwaloff, \emph{Sur l'unicit{\'e} et la multiplicit{\'e} des fonctions analytiques.}, Annales Scientifiques de l'{\'E}cole Normale Sup{\'e}rieure. Troisi{\`e}me S{\'e}rie \textbf{42} (1925), 143--191.

\bibitem{Miller2012}
Luc Miller, \emph{Resolvent conditions for the control of unitary groups and their approximations}, J. Spectr. Theory \textbf{2} (2012), no.~1, 1--55. \MR{2879308}

\bibitem{PhungWang2013}
Kim~Dang Phung and Gengsheng Wang, \emph{An observability estimate for parabolic equations from a measurable set in time and its applications}, J. Eur. Math. Soc. (JEMS) \textbf{15} (2013), no.~2, 681--703. \MR{3017049}

\bibitem{Zworski2012book}
Maciej Zworski, \emph{Semiclassical analysis}, Grad. Stud. Math., vol. 138, Providence, RI: American Mathematical Society (AMS), 2012.

\end{thebibliography}

\end{document}